\theoremstyle{plain}
\newtheorem{theorem}{Theorem}
\newtheorem{lemma}{Lemma}
\newtheorem{definition}{Definition}
\theoremstyle{definition}
\begin{document}

\begin{center}
{\huge $J$-Noetherian  Bezout domain which is not a ring of stable range 1}
\end{center}
\vskip 0.1cm \centerline{{\Large Bohdan Zabavsky, \; Oleh Romaniv}}

\vskip 0.3cm

\centerline{\footnotesize{Department of Mechanics and Mathematics, Ivan Franko National University}}
\centerline{\footnotesize{Lviv, 79000, Ukraine}}
 \centerline{\footnotesize{zabavskii@gmail.com, \; oleh.romaniv@lnu.edu.ua}}
\vskip 0.5cm

\centerline{\footnotesize{August, 2018}}
\vskip 0.7cm

\footnotesize{\noindent\textbf{Abstract:} \textit{We proved that in $J$-Noetherian Bezout domain which is not a ring of stable range 1 exists a nonunit adequate element (element of almost stable range 1).} }
%\vskip 1cm

\vspace{1ex}
\footnotesize{\noindent\textbf{Key words and phrases:} \textit{$J$-Noetherian ring; Bezout ring; adequate ring;  elementary divisor ring;  stable range; almost stable range; neat range; maximal ideal.}

}

\vspace{1ex}
\noindent{\textbf{Mathematics Subject Classification}}: 06F20,13F99.

\vspace{1,5truecm}

\normalsize

%\section{General Appearance}	

Unique factorization domain are, of course, the integral domains in which every nonzero nonunit element has a unique factorization (up to order and associates) into irreducible elements, or atoms.  Now UFDs can also be characterized by the property that every nonzero nonunit is a product of principal primes or equivalently that every nonzero nonunit has the form $up_1^{\alpha_1} \cdot \ldots \cdot  p_n^{\alpha_n}$, where $u$ is a unit, $p_1$, \ldots, $p_n$ are non-associate principal primes and each $\alpha_{i}\ge1$. Each of the $p_{i}^{\alpha_i}$, in addition to being a power of a prime, has other properties,  each of which is subject to generalization. For example,  each $p_i^{\alpha_i}$ is primary and $p_i^{\alpha_i}$ are pairwise comaximal. There exist various generalizations of a (unique) factorization into prime powers in integral domains \cite{BrewerHeinzer2002}. In this paper we consider the comaximal factorization recently introduced by McAdam and Swan \cite{mcdamswan2004}. They defined a nonzero nonunit element $c$ of an integral domain $D$ to be pseudo-irreducible (pseudo-prime) if $d=ab$ ($abR\subset dR$) for comaximal $a$ and $b$  implies that $a$ or $b$ is a unit ($aR\subset dR$ or $bR\subset dR$). A factorization $d=d_i\ldots d_n$ is a (complete) comaximal factorization if each $d_i$ is a nonzero nonunit (pseudo-irreducible) and $d_i$'s are pairwise comaximal. The integral domain $D$ is a comaximal factorization domain if each nonzero nonunit has a complete comaximal factorization.

We call an integral domain atomic if every nonzero nonunit element of $D$ has a factorization into atoms; such a factorization is called an atomic factorization. In \cite{juett2014} it is constructed an atomic domain that is not a comaximal factorization domain. Antimatter domain is an integral domain with no atoms. Among many other interesting results is the one that integral domain is a subring of an antimatter domain that is not a field. Moreover we have the following result:

\begin{theorem}\cite{juett2014}
    Every integral domain is a subring of a one-dimensional antimatter Bezout domain which is a complete comaximal factorization domain.
\end{theorem}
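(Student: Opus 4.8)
The plan is to construct, inside a suitable function field over $D$, a single valuation domain that witnesses all four properties simultaneously. The observation that makes this economical is that a quasi-local domain is automatically a complete comaximal factorization domain: if $x=ab$ with $a$ and $b$ comaximal, then $a$ and $b$ cannot both lie in the unique maximal ideal, so one of them is a unit; hence every nonzero nonunit is pseudo-irreducible and the one-term product $x=x$ is a complete comaximal factorization. So the real content is to produce a one-dimensional antimatter Bezout domain containing $D$.

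First I would pass from $D$ to its field of fractions $F$, fix an indeterminate $t$ transcendental over $F$, and form the chain of discrete valuation rings
\[
V_n \;=\; F[t^{1/n!}]_{(t^{1/n!})}, \qquad n\ge 1.
\]
Since $n!\mid(n+1)!$ we have $t^{1/n!}=\bigl(t^{1/(n+1)!}\bigr)^{n+1}\in F[t^{1/(n+1)!}]$, and a denominator in $F[t^{1/n!}]$ with nonzero constant term keeps a nonzero constant term in $F[t^{1/(n+1)!}]$, so $V_n\subseteq V_{n+1}$. Thus $R:=\bigcup_{n\ge 1}V_n$ is a directed union of valuation rings; it contains $F$, hence $D$. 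Moreover $R$ is a valuation domain of $K:=\bigcup_n F(t^{1/n!})$: any nonzero $x\in K$ lies in some $F(t^{1/n!})$, where $x$ or $x^{-1}$ lies in $V_n\subseteq R$. Rescaling the natural valuations of the $V_n$ so that $v(t^{1/m})=1/m$ for every $m$, they are compatible and glue to a valuation $v$ on $K$ with value group $\bigcup_n\tfrac1{n!}\mathbb Z=\mathbb Q$. Since $\mathbb Q$ is archimedean, $v$ has rank one, so $\dim R=1$; and $R$ is Bezout because every valuation domain is, and $R$ is not a field since $v(t)=1$.

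It remains to check that $R$ is antimatter. The map $v\colon R\setminus\{0\}\to\mathbb Q_{\ge 0}$ is surjective, since every $q\in\mathbb Q_{\ge 0}$ can be written as $m/n!$ with $m,n\in\mathbb Z_{\ge 0}$, and then $(t^{1/n!})^m\in R$ has value $q$. Hence if $x\in R$ is a nonzero nonunit, then $v(x)=q>0$; choosing $y\in R$ with $v(y)=q/2$ we get $x=y\cdot(xy^{-1})$ with $y$ and $xy^{-1}$ both of positive value, hence both nonunits of $R$, so $x$ is not an atom. Thus $R$ has no atoms, and together with the first paragraph this shows $R$ is a one-dimensional antimatter Bezout domain that is a complete comaximal factorization domain and contains $D$, as desired.

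The steps I expect to be slightly delicate are the bookkeeping giving $V_n\subseteq V_{n+1}$ --- which is what makes the union an honest valuation ring with value group exactly $\mathbb Q$ --- and the surjectivity of $v$ onto $\mathbb Q_{\ge 0}$, since the latter is precisely what forces each nonzero nonunit to factor nontrivially; the remaining assertions are routine facts about rank-one valuation domains. One could instead adjoin a compatible system of $n!$-th roots of a uniformizer to any rank-one valuation overring of $F$, but that presupposes that $F$ carries a nontrivial valuation, whereas the construction above applies uniformly to every integral domain $D$.
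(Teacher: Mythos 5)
Your argument is correct, and there is nothing in the paper to compare it against: the paper offers no proof of this statement, quoting it from \cite{juett2014}, so what you give is a complete, self-contained proof. The construction checks out once one fixes a compatible system of roots (say $t_1=t$ and $t_{n+1}^{n+1}=t_n$ in a fixed algebraic closure of $F(t)$, so that the symbols $t^{1/n!}$ are meaningful): each $V_n$ is a DVR, the inclusions $V_n\subseteq V_{n+1}$ hold as you argue, and the union $R$ is a valuation domain of $K$ whose value group is $\bigcup_n \frac{1}{n!}\mathbb{Z}=\mathbb{Q}$; rank one gives Krull dimension one, being a valuation domain gives Bezout, surjectivity of $v$ onto $\mathbb{Q}_{\ge 0}$ gives antimatter, and quasi-locality makes every nonzero nonunit pseudo-irreducible, hence the one-term factorization is a complete comaximal factorization --- consistent both with the paper's remark that in a local ring every nonzero nonunit is pseudo-irreducible and with the McAdam--Swan criterion \cite{mcdamswan2004}, since each nonzero nonunit of $R$ has a unique minimal prime. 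The only point worth recording is that your witness is quasi-local, so the comaximal-factorization property carries no real content, whereas the construction in the cited source (and the Henriksen-type rings the paper is actually interested in) produce domains with many maximal ideals, where complete comaximal factorizations are nontrivial; but the statement as reproduced in the paper asks for nothing more, so your more elementary valuation-theoretic route does prove it, and nothing downstream in the paper depends on which construction is used.
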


Therefore, the subject of research of this paper is a Bezout domain which is a comaximal factorization domain.

Let us start with the following Henriksen example \cite{henriksen1955}
$$
R=\{z_0+a_1x+a_2x^2+\dots+a_nx^n+\dots \mid z_0\in \mathbb{Z},\; a_i\in \mathbb{Q},\; i=1,2,\dots \}.
$$

This domain is a two-dimensional Bezout domain having a unique prime ideal $J(R)$ (Jacobson radical) of height one and having infinitely many maximal ideals corresponding to the maximal ideals of $\mathbb{Z}$. The elements of $J(R)$ are contained in infinitely many maximal ideals of $R$  while the elements of $R\setminus J(R)$ are contained in only finitely many prime ideals.  This ring is a ring of stable range 2 and $R$ is not a ring of stable range~1.

\begin{definition}
  A ring $R$ is said to have a stable range 1 if for every elements $a,b\in R$ such that $aR+bR=R$ we have $(a+bt)R=R$ for some element $t\in R$. A ring $R$ is said to have a stable range 2 if for every elements $a,b,c\in R$ such that $aR+bR+cR=R$ we have $(a+cx)R+(b+cy)R=R$ for some elements $x,y\in R$.
\end{definition}

The Henriksen example is an example of a ring in which every nonzero nonunit element has only finitely many prime ideals minimal over it. By \cite{mcdamswan2004} in Henriksen's example every nonzero nonunit element has a complete comaximal factorization. The element of $J(R)$ has a complete comaximal factorization $cx^n$, where $c\in\mathbb{Q}$ ($cx^n$ is a pseudo-irreducible element). Moreover, every nonzero element of $J(R)$ is pseudo-irreducible. The elements of $R\setminus J(R)$ are contained in only finitely many  prime ideals and have a complete comaximal factorization corresponding their factorization in $\mathbb{Z}$.

It is shown  \cite{mcdamswan2004} that an integral domain  $R$ has a complete comaximal factorization if either 1) each nonzero nonunit of $R$ has only finitely many minimal primes or 2) each nonzero nonunit of $R$ is contained in only finitely many maximal ideals.

For a commutative ring $R$, let $spec\,R$ and $mspec\, R$ denote the collection of all prime ideals and all maximal ideals of $R$ respectively. The Zariski topology on $spec\, R$ is the topology obtained by taking the collection of sets of the form $D(I)=\{ P\in spec\, R\mid J\nsubseteq P\}$  (respectively $V(I)=\{ P\in spec\, R \mid J\subseteq P\}$), for every ideal $I$ of $R$ as the open (respectively closed) sets.

A topological $space\, X$ is called Noetherian  if every nonempty set of closed subsets of $X$ ordered by inclusion has a minimal element. An ideal $J$ of $R$ is called a $J$-radical ideal if it is the intersection of all maximal ideals containing it. Clearly, $J$-radical ideals of $R$ correspond to closed subset of $mspec\, R$.

When considered as a subspace of $space\, R$, $mspace\, R$ is called a max-spectrum of $R$. So its open and closed subsets are
$$
D(I)=D(I)\cap mspec\,R=\{M\in mspec\, R\mid J\nsubseteq M\}
$$
and
$$
V(I)=V(I)\cap mspec\,R=\{M\in mspec\, R\mid J\subseteq M\},
$$
respectively.
Clearly max-spectrum of $R$ is Noetherian if and only if $R$ satisfies the ascending chain condition for $J$-radical ideals, i.e. $R$ is  a $J$-Noetherian ring~\cite{olberding1998}.

%\section{The Main Text}

For a commutative Bezout domain $R$ this condition is equivalent to a condition that every nonzero nonunit element of $R$ has only finitely many prime ideals minimal over it \cite{olberding1998}.

Let $R$ be a domain and $a\in R$. Denote by $minspec\,a$ the set prime ideals minimal over $a$.

\begin{lemma}
  Let $R$ be a Bezout domain and $a$ be a  nonzero nonunit element of $R$. Then $a$ is pseudo-irreducible if and only if $\overline{R}=R/aR$ is connected (i.e. its only idempotents are zero and 1).
\end{lemma}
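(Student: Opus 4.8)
The plan is to establish both implications by contraposition, passing between a comaximal factorization of $a$ in $R$ and a nontrivial idempotent of $\overline{R}=R/aR$; throughout I use that a Bezout domain is a GCD domain in which $\gcd(x,y)$ generates the ideal $xR+yR$, so that ``comaximal'' and ``coprime'' coincide.

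First I would treat the implication: if $\overline{R}$ is not connected, then $a$ is not pseudo-irreducible. Pick a nontrivial idempotent $\bar e\in\overline{R}$ and lift it to $e\in R$, so that $e(e-1)\in aR$ while $e\notin aR$ and $e-1\notin aR$. Put $b=\gcd(a,e)$ and $c=\gcd(a,e-1)$. These are comaximal, since a common divisor of $b$ and $c$ divides both $e$ and $e-1$, hence divides $1$. Because $e$ and $e-1$ are coprime, multiplicativity of $\gcd$ over coprime arguments gives $\gcd\bigl(a,e(e-1)\bigr)=\gcd(a,e)\,\gcd(a,e-1)=bc$; and since $a\mid e(e-1)$, the element $\gcd\bigl(a,e(e-1)\bigr)$ is associate to $a$. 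Hence $a$ and $bc$ are associates, and absorbing a unit we may write $a=bc$. Neither factor is a unit: if $b$ were a unit then $\gcd(a,e)=1$ together with $a\mid e(e-1)$ would force $a\mid e-1$, i.e. $\bar e=\bar 1$; symmetrically a unit $c$ forces $\bar e=\bar 0$; both are excluded. So $a=bc$ is a comaximal factorization into nonunits and $a$ is not pseudo-irreducible.

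For the converse, suppose $a$ is not pseudo-irreducible, say $a=bc$ with $bR+cR=R$ and $b,c$ nonunits. Comaximality gives $bR\cap cR=bcR=aR$, so the Chinese Remainder Theorem yields an isomorphism $R/aR\cong R/bR\times R/cR$, a direct product of two nonzero rings (nonzero since $b$ and $c$ are nonunits). Such a product has the idempotent $(\bar 1,\bar 0)\notin\{0,1\}$, so $\overline{R}$ is not connected. This half, like the idempotent-lifting step above, uses only elementary commutative-ring theory and not the Bezout hypothesis.

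The step I expect to be the main obstacle is the $\gcd$ computation in the first direction --- concretely, proving that $a$ and $bc$ are associates. The substantive ingredient there is the identity $\gcd(a,bc)=\gcd(a,b)\,\gcd(a,c)$ for coprime $b,c$ in a GCD domain (applied with $b,c$ replaced by $e,e-1$), which in turn rests on the fact that coprime divisors of $a$ have product dividing $a$; once that is in hand the remainder is routine bookkeeping with units.
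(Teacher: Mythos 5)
Your proof is correct, and in both directions it runs along a genuinely different track than the paper's, even though the underlying dictionary (nontrivial idempotents of $R/aR$ $\leftrightarrow$ comaximal factorizations of $a$) is the same. For ``not connected $\Rightarrow$ not pseudo-irreducible'', the paper sets $eR+aR=dR$, writes $e=de_0$, $a=da_0$, and extracts comaximality of $d$ and the cofactor $a_0$ from $e_0(1-e)=a_0t$ together with $e_0R+a_0R=R$; you instead name both factors symmetrically as $\gcd(a,e)$ and $\gcd(a,e-1)$, paying for the symmetry with the identity $\gcd\bigl(a,e(e-1)\bigr)\sim\gcd(a,e)\gcd(a,e-1)$ for the coprime pair $e,e-1$. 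That identity does hold here, and in a Bezout domain it decomposes exactly as you indicate: one divisibility is the trivial ideal containment $(a,e)(a,e-1)\subseteq\bigl(a,e(e-1)\bigr)$, and the other is precisely the fact that coprime divisors of $a$ have product dividing $a$; your unit checks (a unit value of $\gcd(a,e)$ forces $a\mid e-1$ by Euclid's lemma, etc.) correctly use both halves of the assumption $\bar e\neq\bar 0,\bar 1$, a point the paper's contradiction argument passes over since it only posits $\bar e\neq\bar 0$. For the converse, the paper argues by contradiction, manufacturing the idempotent $\bar b\bar u$ from a Bezout relation $bu+cv=1$ and eliminating the cases $\bar b\bar u=\bar 0$ and $\bar b\bar u=\bar 1$ by hand, whereas you argue contrapositively via $bR\cap cR=bcR=aR$ and the Chinese Remainder Theorem, so the nontrivial idempotent appears at once as $(\bar 1,\bar 0)$; this is cleaner and, as you observe, valid in any commutative ring, while the paper's version is more self-contained in that it never leaves explicit element computations. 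Net effect: your route isolates one standard gcd lemma as the only substantive input, the paper's route avoids that lemma at the cost of a less transparent case analysis.
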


\begin{proof}
  Let $a$ be a pseudo-irreducible element and let $\bar{e}=e+aR\ne\bar{0}$ and $\bar{e}^2=\bar{e}$. Than $e(1-e)=at$ for some element $t\in R$. Let $eR+aR=dR$, where $d\notin U(R)$ and $e=de_0$, $a=da_0$ and $a_0R+e_0R=R$. Then with $e(1-e)=at$ we have $e_0(1-e)=a_0t$. Since $e_0R+a_0R=R$ we have $a_0R+eR=R$, i.e. $a_0R+dR=R$, so $a$ is not a pseudo-irreducible element. We obtained the contradiction, therefore, $\overline{R}$ is connected.

  Let $\overline{R}$ be a connected ring. Suppose that $a$ is not a pseudo-irreducible element, i.e. $a=bc$, where $b\notin U(R)$, $c\notin U(R)$, and $bR+cR=R$. Than $bu+cv=1$ for some elements $u,v\in R$. Then  $\bar{b}^2\bar{u}=\bar{b}$, where $\bar{b}=b+aR$, $\bar{u}=u+aR$. Since $\bar{b}\bar{u}$ is an idempotent and $\overline{R}$ is connected, we have $\bar{b}\bar{u}=\bar{0}$ or $\bar{b}\bar{u}=\bar{1}$. If $\bar{b}\bar{u}=\bar{0}$ then $bu=at$ for some element $t\in R$. Then $bu=bct$, i.e. $u=ct$. It is impossible since $bu+cv=1$ and $c\notin U(R)$. If $\bar{b}\bar{u}=\bar{1}$ then $aR+bR=R$, i.e. $b\in U(R)$. Thus, we proved that $a$ is pseudo-irreducible element.
\end{proof}

We will notice that in a local ring, every nonzero nonunit element is pseudo-irreducible. If $a$ is a element of domain $R$ where $|minspec \,  a|=1$ then $a$ is also pseudo-irreducible. In Henriksen's example $R=\{z_0+a_1x+a_2x^2+\dots+a_nx^n+\dots \mid z_0\in \mathbb{Z},\; a_i\in \mathbb{Q},\; i=1,2,\dots \}$,
$x$ is  pseudo-irreducible. Note that 6 is not  pseudo-irreducible but $x=6\cdot\frac16 x$, i.e. a  nonunit divisor of a pseudo-irreducible element is not pseudo-irreducible. In the case of a neat element it is not true.

\begin{definition}
A commutative ring $R$ is called an elementary divisor ring \cite{kaplansky1949} if for an arbitrary matrix $A$ of order $n\times m$ over $R$ there exist invertible matrices $P\in GL_n(R)$ and $Q\in GL_m(R)$ such that
\begin{description}
  \item[(1)] $PAQ=D$ is diagonal matrix, $D=(d_{ii})$;
  \item[(2)] $d_{i+1,i+1}R\subset d_{ii}R$.
\end{description}
\end{definition}

By \cite{BVZabav2009}, we have the following result.

\begin{theorem}
  Every $J$-Noetherian  Bezout ring is an elementary divisor ring.
\end{theorem}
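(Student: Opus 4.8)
The plan is to reduce the statement to a classical coprimality criterion for elementary divisor rings and then to extract the required coprimality from the comaximal factorizations that the $J$-Noetherian hypothesis guarantees. By Kaplansky's theorem \cite{kaplansky1949}, a commutative Bezout domain is an elementary divisor ring if and only if every $2\times2$ matrix over it is equivalent to a diagonal matrix; and by elementary row and column operations every $2\times2$ matrix over a Bezout domain can be brought to the form $\left(\begin{smallmatrix} a & b\\ 0 & c\end{smallmatrix}\right)$, after which, by pulling out the greatest common divisor of the entries, it suffices to treat the case $aR+bR+cR=R$. A short computation shows that such a matrix is equivalent to a diagonal matrix exactly when there exist $p,q\in R$ with $paR+(pb+qc)R=R$. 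Thus the theorem reduces to: \emph{for every $a,b,c\in R$ with $aR+bR+cR=R$ there exist $p,q\in R$ with $paR+(pb+qc)R=R$.} (For a Bezout ring that is not a domain one uses the Larsen--Lewis--Shores form of this criterion; the argument below adapts once ``minimal prime'' is read inside $R/aR$.)

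Fix $a,b,c$ with $aR+bR+cR=R$, and assume $a$ is a nonzero nonunit. First I would make two elementary reductions. Since the images of $b$ and $c$ generate $R/aR$, one may choose $p,q$ with $pb+qc\equiv1\pmod{aR}$; for such a pair $(pb+qc)R+aR=R$, and then a short manipulation gives $paR+(pb+qc)R=pR+qcR$. Next, if a maximal ideal $M$ contains $aR$, then $pb+qc\equiv1\pmod M$, so $M$ cannot contain both $p$ and $qc$; hence $paR+(pb+qc)R=R$ is equivalent to $pR+qcR=R$, that is, to $pR+qR=R$ and $pR+cR=R$. So the task becomes: within the (large) set of solutions of $pb+qc\equiv1\pmod{aR}$ --- where $p$ may be changed by an arbitrary element of $cR+aR$, with $q$ adjusted accordingly --- to select $p,q$ with $pR+qR=R$ and $pR+cR=R$. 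This is no longer a formal consequence of the congruence, and it is precisely here that the hypothesis is needed.

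The $J$-Noetherian assumption enters via \cite{olberding1998}: $a$ has only finitely many minimal primes, so by the criterion of McAdam and Swan \cite{mcdamswan2004} it has a complete comaximal factorization $a=a_1\cdots a_n$ into pairwise comaximal pseudo-irreducible elements, and therefore $R/aR\cong\prod_{i} R/a_iR$ with each $R/a_iR$ connected by the Lemma. The plan is to solve the coprimality problem one factor at a time and then recombine: modulo $a_i$ one has $\bar a=0$, so $\bar b$ and $\bar c$ already generate $R/a_iR$, and on this connected and comparatively small quotient one arranges the needed invertibility; the finitely many local solutions are then glued by the Chinese Remainder Theorem, and a final choice of the free parameters secures the remaining coprimality at the maximal ideals that do not contain $a$.

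The hard part will be this middle step: showing that over a connected homomorphic image of $R$ having only finitely many minimal primes the relevant unimodular pair can be reduced. One cannot simply invoke ``semilocal implies stable range $1$'', for such a quotient may well have infinitely many maximal ideals --- this already occurs for the image modulo $x$ in Henriksen's example. The point to exploit is that connectedness forces the finitely many minimal primes to overlap, so that, after passing to the reduced ring and localizing away from the vanishing locus of $\bar c$, the element whose invertibility is at stake lies in a coset meeting the group of units; it is the finiteness of the set of minimal primes that turns this into a finite, Chinese-Remainder-type patching rather than an uncontrolled one. I expect this connected-component lemma to be the technical core; once it is available, the recombination over $a_1,\ldots,a_n$ and the bookkeeping of the two conditions $pR+qR=R$, $pR+cR=R$ are routine.
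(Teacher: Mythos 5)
The paper itself gives no proof of this theorem --- it is quoted from \cite{BVZabav2009} --- so your attempt can only be judged on whether it stands on its own, and it does not. Your opening reduction is fine: the Gillman--Henriksen/Larsen--Lewis--Shores criterion (find $p,q$ with $paR+(pb+qc)R=R$ whenever $aR+bR+cR=R$), the normalization $pb+qc\equiv 1\pmod{aR}$, and the equivalence with $pR+qR=R$ and $pR+cR=R$ are all standard and correct, as is the use of \cite{olberding1998} and \cite{mcdamswan2004} to split $a=a_1\cdots a_n$ into pairwise comaximal pseudo-irreducible factors with each $R/a_iR$ connected. But the proof stops exactly where the theorem begins: the ``connected-component lemma'' you defer to is the entire content of the statement, and the sketch you give for it (``connectedness forces the minimal primes to overlap\dots the element whose invertibility is at stake lies in a coset meeting the group of units'') never even formulates what property of the connected factor is being claimed, let alone proves it. The difficulty is real: connectedness plus finitely many minimal primes is far too weak to ``arrange the needed invertibility'' factor by factor. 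In Henriksen's example the quotient $R/xR$ is connected, has infinitely many maximal ideals, and is not a ring of stable range 1 (modulo its square-zero ideal it is $\mathbb{Z}$), yet $R$ is an elementary divisor ring; so whatever is proved about a connected factor must genuinely involve the pair $(b,c)$ --- in the language of this paper, one must produce a neat element in the coset $b+aR$ (Theorem~\ref{theor-55}), not a unit in some quotient. That step is precisely what \cite{BVZabav2009} supplies and what your plan leaves open.

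Two further, smaller gaps: the theorem concerns $J$-Noetherian Bezout \emph{rings}, while the comaximal factorization of McAdam--Swan and the Lemma identifying pseudo-irreducible elements with connected quotients are stated for domains, so even your decomposition step needs a separate argument in the presence of zero divisors (this is where \cite{hedayatrostami2018} or the Larsen--Lewis--Shores machinery \cite{llsh1974} would have to be invoked and checked); and the final ``recombination'' over $a_1,\dots,a_n$ together with the condition $pR+cR=R$ at maximal ideals not containing $a$ is asserted to be routine but is not carried out, and it interacts with the choice made on each factor. As written, the proposal is a plausible plan whose technical core is missing, so it cannot be accepted as a proof.
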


By \cite{zabboh2017}, for a Bezout domain we have the following result.

\begin{theorem}\label{theor4}
  Let $R$ be a Bezout domain. The following two condition are equivalent:
  \begin{description}
  \item[(1)] $R$ is an elementary divisor ring;
  \item[(2)] for any elements $x,y,z,t\in R$ such $xR+yR=R$ and $zR+tR=R$ there exists an element $\lambda\in R$ such that $x+\lambda y=r\cdot s$, where $rR+zR=R$, $sR+tR=R$ and $rR+sR=R$.
\end{description}
\end{theorem}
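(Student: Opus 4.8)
The plan is to route the proof through the classical theory of diagonal reduction of $2\times 2$ matrices. First note that a Bezout domain is automatically a Hermite ring: from $aR+bR=R$ one gets $au+bv=1$, so $\bigl(\begin{smallmatrix}a&b\\-v&u\end{smallmatrix}\bigr)\in GL_2(R)$, and in general a row over a Bezout domain is a gcd times a unimodular row. By Kaplansky's theorem a Hermite ring is an elementary divisor ring if and only if every $2\times 2$ matrix over it is equivalent to a diagonal matrix, and the divisibility of the diagonal entries is then automatic, since over a Bezout domain $\mathrm{diag}(\alpha,\beta)$ is equivalent to $\mathrm{diag}\bigl(\gcd(\alpha,\beta),\,\alpha\beta/\gcd(\alpha,\beta)\bigr)$. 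Hence it is enough to prove that condition (2) is equivalent to the diagonalizability of every $2\times 2$ matrix over $R$.

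For $(1)\Rightarrow(2)$ I would, given comaximal pairs $xR+yR=R$ and $zR+tR=R$, build a $2\times 2$ matrix encoding this data — for instance, picking $u,v$ with $xu+yv=1$, the matrix $A=\bigl(\begin{smallmatrix}x&-vzt\\y&uzt\end{smallmatrix}\bigr)$, whose entries have gcd $1$ and whose determinant is $zt$, so that its Smith form is $\mathrm{diag}(1,zt)$. Diagonalizing, $PAQ=\mathrm{diag}(1,zt)$ with $P,Q\in GL_2(R)$; writing $A=P^{-1}\,\mathrm{diag}(1,zt)\,Q^{-1}$ and comparing first columns expresses $x$ and $y$ through the entries of $P^{-1}$, $Q^{-1}$ and the product $zt$, and eliminating the redundant parameters produces $\lambda\in R$ together with a factorization $x+\lambda y=rs$ in which the factor $z$ has been routed into $r$ and the factor $t$ into $s$ — forcing $rR+zR=R$, $sR+tR=R$ — while the invertibility of $P,Q$ forces $rR+sR=R$.

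For $(2)\Rightarrow(1)$ I would take $A\in M_2(R)$, factor out the gcd of all entries, and use the Bezout property to reduce $A$ by elementary row and column operations to triangular form $\bigl(\begin{smallmatrix}a&b\\0&c\end{smallmatrix}\bigr)$ with $aR+bR+cR=R$; in this form $b$ can be replaced by any member of $b+\gcd(a,c)R$, and $bR+\gcd(a,c)R=R$. Writing $e=\gcd(a,c)$, $a=ea'$, $c=ec'$ with $a'R+c'R=R$, I apply (2) to the comaximal pairs $(b,e)$ and $(a',c')$ to get $\lambda$ with $b+\lambda e=rs$, $rR+a'R=R$, $sR+c'R=R$, $rR+sR=R$; since $rs\equiv b\pmod e$ and $\gcd(b,e)=1$ one also has $rR+eR=R=sR+eR$, hence $rR+aR=R$ and $sR+cR=R$. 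Replacing $b$ by $rs$, I would then use the comaximal factorization $rs$, together with $r$ being a unit modulo $a$ and $s$ a unit modulo $c$, to carry out a final sequence of elementary operations that splits the matrix along $r$ and $s$ and makes it diagonal.

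The step I expect to be the real obstacle, in both directions, is forcing the three comaximality relations of (2) to line up exactly with the splitting of the Smith normal form: in $(1)\Rightarrow(2)$, choosing the auxiliary matrix so that the two factors of $zt$ genuinely separate between $P^{-1}$ and $Q^{-1}$ instead of mixing; and in $(2)\Rightarrow(1)$, turning the modular statement — $r$ invertible mod $a$, $s$ invertible mod $c$, $r$ and $s$ comaximal — into an honest diagonal reduction over $R$ itself and not merely over the quotient $R/rsR$. The fact that $R$ is a domain is used throughout, to manipulate divisibility relations such as $a=ea'$ and $b+\lambda e=rs$ without worrying about zero divisors.
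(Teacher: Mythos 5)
First, note that the paper does not actually prove this theorem: it is quoted from the cited work of Zabavsky and Bokhonko, so there is no in-paper argument to compare with; your proposal must therefore stand on its own, and as it stands it has genuine gaps at exactly the decisive points, which you yourself flag as ``the real obstacle.'' In the direction $(1)\Rightarrow(2)$ the construction cannot work as described: your auxiliary matrix $A=\left(\begin{smallmatrix}x&-vzt\\ y&uzt\end{smallmatrix}\right)$ sees $z$ and $t$ only through the product $zt$, and its Smith form $\mathrm{diag}(1,zt)$ retains no information distinguishing $z$ from $t$; hence ``comparing first columns'' of $A=P^{-1}\mathrm{diag}(1,zt)Q^{-1}$ can only give $x$ and $y$ as bilinear expressions in the entries of $P^{-1},Q^{-1}$ with $zt$ entering as a block, and there is no mechanism that ``routes $z$ into $r$ and $t$ into $s$,'' nor even one that produces a relation of the special shape $x+\lambda y=rs$ (coefficient $1$ on $x$). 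Any workable argument must keep $z$ and $t$ in separate entries of the matrix being diagonalized and extract the three comaximality relations from unimodularity of rows and columns of the transforming matrices; this is the entire content of the implication and it is absent.

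In the direction $(2)\Rightarrow(1)$ the first half of your plan is sound: reduction of a $2\times2$ matrix to $\left(\begin{smallmatrix}a&b\\ 0&c\end{smallmatrix}\right)$ with $aR+bR+cR=R$, the observation that $b$ may be altered modulo $e=\gcd(a,c)$, the application of $(2)$ to the pairs $(b,e)$ and $(a',c')$, and the deduction $rR+aR=R$, $sR+cR=R$ are all correct. But the last step --- that $\left(\begin{smallmatrix}a&rs\\ 0&c\end{smallmatrix}\right)$ with $rR+aR=R$, $sR+cR=R$, $rR+sR=R$ admits diagonal reduction --- is the crux, and ``a final sequence of elementary operations that splits the matrix along $r$ and $s$'' is not an argument. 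By the Gillman--Henriksen criterion this step amounts to producing $p,q$ with $paR+(prs+qc)R=R$, and the natural choices fail: for instance $p=s'$, $q=c'r$ (with $ss'+cc'=1$) gives the pair $(s'a,\,r)$, and making $s'$ comaximal with $r$ by adjusting it modulo $c$ is precisely a stable-range-1 type condition on $R/rR$ that you are not entitled to assume. So the implication is not established by the proposal; this lemma needs a genuine proof (it is essentially the ``neat element'' diagonalization result underlying Theorems 5--7 of the paper), and supplying it --- together with a reworked construction for $(1)\Rightarrow(2)$ --- is what remains to be done.
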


\begin{definition}
  Let $R$ be a Bezout domain. An element $a\in R$ is called a neat element if for every elements $b,c\in R$ such that $bR+cR=R$ there exist $r,s\in R$ such that $a=rs$ where $rR+bR=R$, $sR+cR=R$ and $rR+sR=R$. A Bezout domain is said to be of neat range 1 if for any $c,b\in R$ such that $cR+bR=R$ there exists $t\in R$ such that $a+bt$ is a neat element.
\end{definition}

According to Theorem~\ref{theor4} we will obtain the following  result.

\begin{theorem}\label{theor-55}
  A commutative Bezout domain $R$ is an elementary divisor domain if and only if $R$ is a ring of neat range 1.
\end{theorem}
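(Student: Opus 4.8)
The plan is to prove the two implications separately; the converse drops out of Theorem~\ref{theor4} in a couple of lines, while the direct implication carries all the content.

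\textbf{Neat range $1$ $\Rightarrow$ elementary divisor domain.} Let $x,y,z,t\in R$ with $xR+yR=R$ and $zR+tR=R$. Applying the neat range $1$ hypothesis to the comaximal pair $x,y$ yields $\lambda\in R$ for which $x+\lambda y$ is a neat element; applying the definition of a neat element to the comaximal pair $z,t$ then yields $r,s\in R$ with $x+\lambda y=rs$, $rR+zR=R$, $sR+tR=R$ and $rR+sR=R$. This is precisely condition (2) of Theorem~\ref{theor4}, so $R$ is an elementary divisor domain.

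\textbf{Elementary divisor domain $\Rightarrow$ neat range $1$.} Now $aR+bR=R$ is given, and one must exhibit a \emph{single} $t$ for which $a+bt$ is neat, that is, one $t$ that succeeds against \emph{every} comaximal pair $c,d$ simultaneously. This is strictly more than the ``for each comaximal $c,d$, some $\lambda$'' that Theorem~\ref{theor4} supplies, and interchanging those quantifiers is the crux of the matter. The route I would take rests on the following remark: if $m=m_1\cdots m_k$ with the $m_i$ pairwise comaximal nonunits, each lying in a unique maximal ideal of $R$ (equivalently $R/m_iR$ local, whence by the Lemma each $m_i$ is pseudo-irreducible), then $m$ is neat. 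Indeed, for comaximal $c,d$ no maximal ideal contains both of them, so the maximal ideal over each $m_i$ misses $c$ or misses $d$; collecting into a product $r$ those $m_i$ missing $c$, and into $s$ the remaining ones (which then miss $d$), gives $m=rs$ with $rR+cR=R$, $sR+dR=R$, and $rR+sR=R$ by pairwise comaximality. The complementary observation that a nonunit divisor of a neat element is again neat --- split a given factorisation of the larger element through greatest common divisors --- shows this sufficient condition is the natural target.

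It therefore suffices to choose $t$ so that $a+bt$ factors into pairwise comaximal elements each lying in a single maximal ideal; equivalently, so that $R/(a+bt)R$ is a finite direct product of local rings. Extracting such a $t$ is the substantial step, and here the elementary divisor condition must be spent in full: starting from the comaximal pair $a,b$, Theorem~\ref{theor4} provides, for any auxiliary comaximal pair one feeds it, a correction by a multiple of $b$ that splits $a$ along a prescribed closed subset of the maximal spectrum, and one must assemble these corrections into a single $t$. The obstacle I expect to confront --- essentially the only delicate point --- is the termination and coherence of this reduction: one has to check that finitely many ``critical'' comaximal pairs suffice and that the corresponding finitely many corrections merge into one $t$ via the Bezout (Chinese-remainder-type) arithmetic of $R$ (it may be cleaner to phrase the target as ``$R/(a+bt)R$ is a clean ring'' and invoke Theorem~\ref{theor4} more sparingly). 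Once $a+bt$ has been brought to this form, the remark above completes the proof.
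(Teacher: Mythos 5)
Your first implication is fine and is exactly the paper's route: given comaximal $x,y$ and comaximal $z,t$, neat range $1$ produces $\lambda$ with $x+\lambda y$ neat, and neatness applied to the pair $z,t$ yields precisely the factorization demanded in condition (2) of Theorem~\ref{theor4}, so $R$ is an elementary divisor domain. No complaints there.

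The converse, which is where all the substance lies, is not proved in your proposal. You correctly isolate the quantifier interchange: Theorem~\ref{theor4} supplies, for each comaximal pair $(z,t)$, some $\lambda$ depending on that pair, whereas neat range $1$ requires a single $\lambda$ that works against every comaximal pair simultaneously. But the bridge you sketch is both unestablished and aimed at a strictly stronger target. You propose to choose $t$ so that $a+bt$ is a product of pairwise comaximal elements each lying in a unique maximal ideal, i.e. so that $R/(a+bt)R$ is a finite direct product of local rings. Neatness requires no such finiteness: in an adequate Bezout domain (for instance the ring of entire functions) every nonzero element is neat, including elements lying in infinitely many maximal ideals, so your condition is much stronger than what is needed, and no argument is given (nor is one evident) that in an arbitrary elementary divisor Bezout domain the coset $a+bR$ contains an element of this special form --- the finiteness hypotheses ($J$-Noetherian, finitely many minimal primes over each element) that do yield such comaximal factorizations enter the paper only in its later theorems, not in Theorem~\ref{theor-55}. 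The step you explicitly defer (``finitely many critical comaximal pairs suffice and the corresponding corrections merge into one $t$'') is exactly the missing argument: the maximal spectrum is in general infinite, so no finiteness mechanism is available, and no coherence or merging argument is supplied. Hence the implication ``elementary divisor domain $\Rightarrow$ neat range $1$'' remains open in your write-up. Note also that the paper does not argue this way at all: it deduces Theorem~\ref{theor-55} directly from the criterion of Theorem~\ref{theor4} (taken from the cited Zabavsky--Bokhonko paper), with no factorization of $a+bt$ into comaximal pieces each contained in a single maximal ideal.
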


\begin{theorem}
  A nonunit divisor of a neat element of a commutative Bezout domain is a neat element.
\end{theorem}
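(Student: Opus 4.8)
The plan is to turn the statement into one short computation with finitely generated (hence principal) ideals. Suppose $a$ is neat and $a=a_1a_2$ with $a_1$ a nonunit; I must show $a_1$ is neat. So fix $b,c\in R$ with $bR+cR=R$. Applying neatness of $a$ to this pair produces $p,q\in R$ with $a=pq$, $pR+bR=R$, $qR+cR=R$ and $pR+qR=R$. The idea is then to split $a_1$ along the same coprime decomposition $a=pq$: put $r$ equal to a generator of $a_1R+pR$ and $s$ equal to a generator of $a_1R+qR$ (both ideals are principal since $R$ is Bezout), i.e.\ $r=\gcd(a_1,p)$ and $s=\gcd(a_1,q)$ up to associates.

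The heart of the argument is the identity $(a_1R+pR)(a_1R+qR)=a_1R+pqR$. One gets this by expanding the left-hand product as $a_1(a_1R+pR+qR)+pqR$ and observing that $a_1R+pR+qR=R$ because already $pR+qR=R$. Since $a_1\mid pq$ (as $pq=a=a_1a_2$) the right-hand side is just $a_1R$. Hence $rsR=a_1R$, so $rs$ is an associate of $a_1$; absorbing the unit into $r$, I may assume $a_1=rs$ outright. The three remaining conditions are now formal: $rR+bR\supseteq pR+bR=R$, $sR+cR\supseteq qR+cR=R$, and $rR+sR=a_1R+pR+qR=R$, again because $pR+qR=R$. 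Thus $a_1=rs$ is a neat factorization relative to the arbitrary coprime pair $b,c$, so $a_1$ is neat.

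I do not foresee a real obstacle here. The only content is the displayed ideal identity, and it is a one-line expansion whose whole point is that the hypothesis $pR+qR=R$ kills the mixed terms $a_1pR$ and $a_1qR$ modulo $a_1R$. The one place to stay careful is the passage from $rsR=a_1R$ to the literal equality $a_1=rs$ required by the definition of a neat element: this costs a unit, and one should check (trivially) that replacing $r$ by a unit multiple leaves all three comaximality relations intact. Note also that no appeal to Theorems~\ref{theor4} or~\ref{theor-55} is needed; the statement is a purely divisibility-theoretic property of Bezout domains.
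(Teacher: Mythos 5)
Your proof is correct. It has the same skeleton as the paper's argument: fix the comaximal pair $b,c$, invoke neatness of $a$ to obtain $a=pq$ with $pR+bR=R$, $qR+cR=R$, $pR+qR=R$, and then split the given divisor along this comaximal decomposition by taking greatest common divisors. The difference is in how the splitting is justified. The paper (writing $a=xy$ and $a=rs$ for the neat factorization) takes only one gcd, $\alpha$ with $\alpha R=rR+xR$, writes $x=\alpha x_0$, and shows by an explicit B\'ezout-coefficient manipulation ($r_0u+x_0v=1$, hence $s=x_0(yu+sv)$) that the cofactor $x_0$ divides $s$; the three comaximality conditions then follow from $rR\subseteq\alpha R$ and $sR\subseteq x_0R$. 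You instead take both gcds $r$ and $s$ generating $a_1R+pR$ and $a_1R+qR$ and prove $rsR=a_1R$ from the ideal identity $(a_1R+pR)(a_1R+qR)=a_1R+pqR=a_1R$, which uses exactly the two facts $pR+qR=R$ and $a_1\mid pq$. The resulting factorizations agree up to associates, so the two proofs are close in substance; yours is the symmetric, ideal-theoretic version and is slightly cleaner, at the cost of the final unit adjustment, which you correctly identify and handle (rescaling $r$ by a unit changes none of the comaximality relations, and in a domain $rsR=a_1R$ does give that $rs$ and $a_1$ are associates). Your closing remark is also consistent with the paper: its proof of this theorem is likewise purely divisibility-theoretic and makes no appeal to Theorem~\ref{theor4} or Theorem~\ref{theor-55}.
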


\begin{proof}
  Let $R$ be a commutative Bezout domain and $a$ be a neat element of $R$. Let $a=xy$. Then for every element $b,c\in R$ such that $bR+cR=R$ there exist $r,s\in R$ such that $a=rs$ where $rR+bR=R$, $sR+cR=R$ and $rR+sR=R$. Let $rR+xR=\alpha R$, $r=\alpha r_0$, $x=\alpha x_0$ for some elements $r_0, x_0\in R$ such that $r_0R+x_0R=R$. Since $a=rs=xy$ we have $\alpha r+0s=\alpha x_0 y$. Then $r_0 s=x_0 y$. Since $r_0R+x_0R=R$, then $r_0u+x_0v=1$ for some elements $u,v \in R$. By $r_0s = x_0y$ we have $r_0su+x_0sv=s$, $x_0(yu+sv)=s$. We have $x=\alpha x_0$ where $\alpha R+bR=R$ and $x_0R+cR=R$ and $\alpha R+x_0R=R$ since $rR\subset \alpha R$ and $sR\subset x_0R$, i.e. $x$  is a neat element.
\end{proof}

\begin{theorem}
  Let $R$ be a $J$-Noetherian  Bezout domain which is not a ring of stable range 1. Then in $R$ there exists an element $a\in R$ such that $R/aR$ is a local ring.
\end{theorem}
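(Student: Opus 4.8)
The plan is to isolate one lemma and let the cited results do the rest: in a commutative Bezout domain, \emph{a pseudo-irreducible element that is also a neat element has a local factor ring.} Granting this, here is how $R$ yields the required $a$. Being a $J$-Noetherian Bezout domain, $R$ is an elementary divisor ring by the cited theorem, hence of neat range $1$ by Theorem~\ref{theor-55}; and since every nonzero nonunit of a $J$-Noetherian Bezout domain lies in only finitely many minimal primes, $R$ is a comaximal factorization domain by the results of McAdam and Swan. Using that $R$ is not of stable range $1$, fix $a_0,b\in R$ with $a_0R+bR=R$ but $a_0+bt\notin U(R)$ for every $t\in R$. Then $b\notin U(R)$ and $a_0+bt\neq0$ for all $t$ --- otherwise $a_0R\subseteq bR$, so $bR=R$ and $a_0+bt=1$ for a suitable $t$. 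Applying neat range $1$ to the comaximal pair $a_0,b$, I pick $t$ with $c:=a_0+bt$ neat; thus $c$ is a nonzero nonunit. Finally I take a complete comaximal factorization $c=c_1\cdots c_m$ (with $m\ge1$): each $c_i$ is a pseudo-irreducible nonzero nonunit, and $c_1$, being a nonunit divisor of the neat element $c$, is neat by the theorem that a nonunit divisor of a neat element is neat. So $a:=c_1$ is a pseudo-irreducible neat nonunit, and the problem reduces to the lemma.

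For the lemma, suppose $R/aR$ is not local. Since $a\notin U(R)$ we have $R/aR\neq0$, so there are distinct maximal ideals $M\neq N$ of $R$ with $a\in M\cap N$. Distinct maximal ideals are comaximal, so $m+n=1$ for some $m\in M$, $n\in N$, and in particular $mR+nR=R$. Applying the definition of neatness to $a$ with the comaximal pair $(n,m)$ gives $a=rs$ with $rR+nR=R$, $sR+mR=R$ and $rR+sR=R$. From $a=rs\in N$ and $N$ prime, $r\in N$ or $s\in N$; the former contradicts $rR+nR=R$ (as $n\in N$), so $s\in N$, whence $s\notin U(R)$. Symmetrically, from $a=rs\in M$ with $m\in M$ and $sR+mR=R$ we get $r\in M$, so $r\notin U(R)$. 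But then $a=rs$ is a factorization into two nonunits with $rR+sR=R$, contradicting pseudo-irreducibility of $a$. Hence $R/aR$ is local, which completes the proof.

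The write-up itself is short; the real work is recognising the right target. The points to get right are: the failure of stable range $1$ is precisely what turns neat range $1$ into the existence of a genuinely \emph{nonunit} neat element (in a ring of stable range $1$, neat range $1$ only produces units); passing to a comaximal factor preserves neatness while gaining pseudo-irreducibility; and pseudo-irreducibility together with neatness forces the factor ring to have a unique maximal ideal. The $J$-Noetherian hypothesis is used only to guarantee that $c$ admits a complete comaximal factorization, and the two degenerate cases ($c=0$, $b$ a unit) are the only routine verifications.
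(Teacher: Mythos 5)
Your proof is correct, and its skeleton up to the reduction is the same as the paper's: $R$ is an elementary divisor ring, hence of neat range $1$; the failure of stable range $1$ converts neat range $1$ into a nonzero nonunit neat element $c$ (you spell out the degenerate cases $c=0$ and $b\in U(R)$, which the paper passes over silently); and a complete comaximal factorization of $c$ together with the theorem on nonunit divisors of neat elements produces a pseudo-irreducible neat nonunit $a$. Where you genuinely diverge is the final step. The paper argues: pseudo-irreducible implies $R/aR$ is connected (its Lemma), neatness of $a$ implies $R/aR$ is clean (asserted, not proved there), and a connected clean commutative ring is local by \cite{ZabavskyGatalevych2015}. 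You instead prove directly that a pseudo-irreducible neat element has a local factor ring: if $M\neq N$ are maximal ideals containing $a$, comaximality gives $m+n=1$, neatness applied to the pair $(n,m)$ gives $a=rs$ with $rR+nR=R$, $sR+mR=R$, $rR+sR=R$, and primeness of $N$ and $M$ forces $s\in N$ and $r\in M$, so $a$ is a product of two comaximal nonunits, contradicting pseudo-irreducibility. This is a real simplification: it bypasses the unproved ``neat implies clean factor ring'' step and the external reference, and it does not even use the paper's lemma identifying pseudo-irreducibility with connectedness of $R/aR$. What the paper's longer route buys is the intermediate structural information (connectedness and cleanness of $R/a_iR$) that fits into its broader framework; your argument is leaner but tailored to this one statement. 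Either way, your proof is complete and correct.
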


\begin{proof}
  By \cite{BVZabav2009}, $R$ is an elementary divisor ring. By Theorem~\ref{theor-55} $R$ is a ring of neat range 1. Since $R$ is not a ring of stable range 1, then in $R$ there exist a  nonunit neat element $a$. Since $R$ is a complete factorization ring \cite{hedayatrostami2018}, then $a$ has a factorization $a=a_1\dots a_n$ where $a$ is a nonzero nonunit pseudo-irreducible element and $a_i$'s are pairwise comaximal.  Since a nonunit divisor of a neat element is  a neat element, we have that $a_i$ is a neat element for all  $i=1,\dots,n$. Since all $a_i$ are pseudo-irreducible, then  $R/aR$ is connected ring. Since $a_i$ is a neat element we have that $R/a_iR$ is connected clean ring. By \cite{ZabavskyGatalevych2015}, $R/a_iR$ is local ring.
\end{proof}

%\begin{proof}
%  Let $b\in M\cap N$ where $M\in mspec R$,  $N\in mspec R$ and $M\ne N$. Then $M+N=R$ and $m+n=1$ for some element  $m\in M$,  $n\in N$. Since $b$ is a neat element, we have $b=r\cdot s$, $rR+mR=R$, $sR+mR=R$ and $rR+sR=R$. Since $b$ is pseudo-irreducible we have $r\in U(R)$ or $s\in U(R)$. If $r\in U(R)$ we have $sR\subset bR$ and since $bR+nR=R$ we have that $N=R$.    If $s\in U(R)$ we have $M=R$, i.e. $R/bR$ is local ring.
%\end{proof}

By \cite{Zab14}, any adequate element of a commutative Bezout ring is a neat
element.
%An example of a neat elements can serge an adequate elements.

\begin{definition}
    An element $a$ of a domain  $R$ is said  to be adequate, if for every element $b\in R$ there exist elements  $r,s\in R$ such that:
\begin{description}
  \item[(1)] $a=rs$;
  \item[(2)] $rR+bR=R$;
  \item[(3)] $s'R+bR\ne R$ for any $s'\in R$ such that $sR\subset s'R\ne R$.
\end{description}
A domain $R$ is called adequate if every nonzero element of $R$ is adequate.
\end{definition}

The most trivial examples of adequate elements are units,  atoms in a ring,  and also square-free elements \cite{BVZmono2012}.

Henriksen observed that in an adequate domain every nonzero prime ideal is contained in an unique maximal ideal \cite{henriksen1955}.

\begin{theorem}
  Let $R$ be a commutative Bezout element and $a$ is non-zero nonunit element of $R$. If $R/aR$ is local ring then $a$ is an adequate element.
\end{theorem}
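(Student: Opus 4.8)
\emph{Proof proposal.} The plan is to argue directly from the definition of an adequate element, using the single structural fact that $R/aR$ being local pins down a unique maximal ideal of $R$ lying over $a$. First I would record that the maximal ideals of $R/aR$ correspond exactly to the maximal ideals of $R$ containing $a$; since $R/aR$ is local, there is therefore a unique maximal ideal $M$ of $R$ with $a\in M$. The key observation is then that \emph{every nonunit divisor of $a$ lies in $M$}: if $a=s't$ with $s'$ a nonunit, then $s'R$ is a proper ideal, hence contained in some maximal ideal $M'$, and $a\in s'R\subseteq M'$ forces $M'=M$ by uniqueness, so $s'\in M$.

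With this in hand, fix an arbitrary $b\in R$ and split into two cases according to whether $b\in M$. If $b\notin M$, I claim $aR+bR=R$: otherwise $aR+bR$ would lie inside some maximal ideal, which must be $M$ (it contains $a$), contradicting $b\notin M$. So I take $r=a$ and $s=1$; then $a=rs$, $rR+bR=R$, and condition (3) holds vacuously because $sR=R$ admits no proper ideal $s'R$ with $R\subseteq s'R\neq R$.

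If instead $b\in M$, I take $r=1$ and $s=a$. Then $a=rs$ and $rR+bR=R$ trivially. For condition (3), let $s'\in R$ satisfy $aR\subseteq s'R\neq R$; then $s'$ is a nonunit divisor of $a$, so $s'\in M$ by the key observation, and since also $b\in M$ we obtain $s'R+bR\subseteq M\neq R$. Hence $r$ and $s$ witness the adequacy of $a$ with respect to $b$, and since $b$ was arbitrary, $a$ is adequate.

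I do not anticipate a serious obstacle: the content lies entirely in the bijection between the maximal ideals of $R$ containing $a$ and the maximal ideals of $R/aR$, together with the remark that the ``trivial'' factorizations $a=a\cdot 1$ and $a=1\cdot a$ already suffice. The one point requiring care is the exact reading of condition (3), which quantifies over \emph{all} proper principal overideals $s'R$ of $sR$; in the case $b\in M$ one must observe that each such $s'$ is genuinely a divisor of $a$ (immediate from $a\in aR\subseteq s'R$) and is therefore caught by the key observation. Note that the Bezout hypothesis serves only as the ambient setting and is not actually invoked in this argument.
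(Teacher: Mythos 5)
Your argument is correct and coincides with the paper's own proof: both use the unique maximal ideal $M$ of $R$ containing $a$ (coming from $R/aR$ being local), take the trivial factorizations $a=a\cdot 1$ when $b\notin M$ and $a=1\cdot a$ when $b\in M$, and check condition (3) by noting every nonunit divisor of $a$ lies in $M$. Your write-up merely makes explicit the points the paper leaves implicit (the correspondence of maximal ideals and the vacuity of condition (3) when $s=1$), including the observation that the Bezout hypothesis is not really needed.
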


\begin{proof}
  Let $b\in R$. Since $R/aR$ is local ring, then there exists a unique maximal ideal $M$ such that $a\in M$.

  If $b\in M$ we have $a=1\cdot a$ and for each nonunit divisor $s$ of $a$ we have $sR+bR\in M$, i.e. $sR+bR\ne R$.

  If $b\notin M$, we have $aR+bR=R$.
\end{proof}

\begin{theorem}
  Let $R$ be a $J$-Noetherian Bezout domain which is not a ring of stable range 1. Then in $R$ there exists a nonunit adequate element.
\end{theorem}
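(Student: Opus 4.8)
The plan is to obtain the desired element by simply chaining the two immediately preceding theorems. By the theorem asserting that a $J$-Noetherian Bezout domain which is not a ring of stable range $1$ contains an element $a\in R$ with $R/aR$ a local ring, I first fix such an $a$. The one small point to check is that $a$ is genuinely a nonzero nonunit: if $a$ were a unit then $R/aR$ would be the zero ring, which is not local (a local ring satisfies $0\neq 1$); and if $a=0$ then $R/aR\cong R$ would be local, forcing $R$ itself to be local and hence of stable range $1$, contrary to hypothesis. So $a$ is a nonzero nonunit with $R/aR$ local.

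Next I would invoke the theorem stating that for a commutative Bezout domain $R$ and a nonzero nonunit $a$, the ring $R/aR$ being local implies that $a$ is adequate. Applying it to the element $a$ produced above yields at once that $a$ is adequate, and since $a$ is a nonunit, $a$ is the required nonunit adequate element.

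There is no real obstacle: the statement is a direct corollary of the two theorems just established, and the only care needed is the elementary verification above that the element furnished by the first theorem meets the nonzero-nonunit hypothesis of the second. For the reader's convenience one may also summarize the underlying chain of implications: $R$ not of stable range $1$ gives (via neat range $1$, Theorem~\ref{theor-55}, together with complete comaximal factorization and Lemma~1) a nonunit neat element whose pseudo-irreducible, pairwise comaximal factors have local quotient rings; gluing these factors produces an element $a$ with $R/aR$ local; and such an $a$ is adequate. This completes the proof.
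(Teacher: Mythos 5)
Your proposal is correct and matches the paper's intent exactly: the paper states this theorem without a separate proof, as an immediate corollary of the two preceding theorems (existence of $a$ with $R/aR$ local, and the fact that such a nonzero nonunit $a$ is adequate). Your extra check that the element furnished by the first theorem is necessarily a nonzero nonunit is a reasonable small addition and is argued correctly.
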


\begin{theorem}\label{theor-1-2}
    Let $R$ be a  Bezout domain in which every nonzero nonunit element has only finitely many prime ideals minimal over it. Then  the factor ring $R/aR$ is the direct sum of  valuation rings.
\end{theorem}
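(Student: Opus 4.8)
Let $a$ be the element in question; if $a$ is a unit there is nothing to prove, so assume $a$ is a nonzero nonunit. The plan is to break $R/aR$ into indecomposable blocks by means of a comaximal factorization together with the Chinese Remainder Theorem, and then to recognise each block as a valuation ring.

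First I would read the hypothesis as ``$R$ is a $J$-Noetherian Bezout domain'' (the equivalence is recalled above), so all of the structure theory quoted earlier is available. By the McAdam--Swan criterion recalled in the introduction, $a$ has a complete comaximal factorization $a = a_1 \cdots a_n$ with each $a_i$ a nonzero nonunit pseudo-irreducible element and the $a_i$ pairwise comaximal. By CRT this yields a ring isomorphism $R/aR \cong \bigoplus_{i=1}^{n} R/a_iR$, so it suffices to prove that $R/a_iR$ is a valuation ring for each $i$; equivalently, we may assume that $a$ itself is pseudo-irreducible.

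Assume then that $a$ is pseudo-irreducible. By the Lemma, $\overline{R}=R/aR$ is connected. Next I would pin down its spectrum: $\sqrt{aR}/aR$ is a nil ideal of $\overline{R}$ and idempotents lift modulo nil ideals, so $R/\sqrt{aR}$ is connected as well; it is a reduced Bezout ring whose minimal primes are the minimal primes $P_1,\dots,P_m$ of $a$. Two distinct minimal primes over an element of a Bezout domain are comaximal: if both were contained in a maximal ideal $M$, their extensions to the valuation ring $R_M$ would be comparable, contradicting minimality over $aR_M$. Hence CRT gives $R/\sqrt{aR} \cong \prod_{j=1}^{m} R/P_j$, and connectedness forces $m=1$; so $\sqrt{aR}=P$ is prime and $P/aR$ is the unique minimal prime of $\overline{R}$.

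The decisive, and hardest, step is to upgrade ``$\overline{R}$ is a connected Bezout ring with a unique minimal prime'' to ``$\overline{R}$ is a valuation ring''. Connectedness alone does not suffice (in the Henriksen example $R/xR$ is a connected Bezout ring whose ideals are not linearly ordered), so one must feed in extra information about the generator; concretely, one wants each connected factor $R/a_iR$ to be a \emph{local} ring. This is precisely the point at which the hypotheses placing $R$ in the elementary-divisor / neat-range-$1$ setting, the fact that a nonunit divisor of a neat element is neat, and the earlier theorems producing local (indeed adequate) quotients must be combined; I would isolate ``each $R/a_iR$ is local'' as the key lemma and expect that to carry essentially all the weight. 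Granting it, the conclusion is immediate: a local Bezout ring is a chain ring, since from $(b,c)=(d)$, $b=d\beta$, $c=d\gamma$, $d=b\lambda+c\mu$ one gets $d(1-\beta\lambda-\gamma\mu)=0$, and either $\beta$ or $\gamma$ is a unit (so $bS\subseteq cS$ or $cS\subseteq bS$) or $\beta\lambda+\gamma\mu$ lies in the maximal ideal, forcing $d=0$. Thus every $R/a_iR$ is a valuation ring and $R/aR \cong \bigoplus_i R/a_iR$ is a direct sum of valuation rings, with the whole difficulty concentrated in the locality of the comaximal factors and everything else being routine CRT bookkeeping.
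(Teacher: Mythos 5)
Your reduction is sound as far as it goes: the comaximal factorization $a=a_1\cdots a_n$ into pairwise comaximal pseudo-irreducibles plus the Chinese Remainder Theorem does split $R/aR$ into connected blocks, and this is essentially the same block decomposition the paper reaches by a different route (the paper never mentions pseudo-irreducibility here; it takes the finitely many minimal primes $P_1,\dots,P_n$ of $aR$, quotes Larsen--Levis--Shores \cite{llsh1974} for the comaximality of the $\overline{P}_i$ in $\overline{R}=R/aR$, applies CRT modulo $\mathrm{rad}\,\overline{R}=\bigcap\overline{P}_i$, and lifts the resulting idempotents through the nil radical to write $\overline{R}=\overline{e}_1\overline{R}\oplus\cdots\oplus\overline{e}_n\overline{R}$). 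The genuine gap is the step you yourself flag as decisive and then merely ``grant'': that each block $R/a_iR$ is local. You give no argument for it, and in fact it is not provable from the hypotheses as stated -- your own Henriksen observation is the witness. In Henriksen's ring $x$ is pseudo-irreducible and has a unique minimal prime $J(R)$, yet $R/xR$ is connected, not local, and not a valuation ring; so ``connected block with finitely many minimal primes'' cannot be upgraded to ``local'' by any amount of CRT bookkeeping. The neat-element machinery you appeal to (neat range $1$, divisors of neat elements being neat, local quotients) is what the paper uses in its \emph{other} theorems about a specific neat element $a$; it is not available for an arbitrary nonzero nonunit $a$, which is what this theorem is about.

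What actually closes the gap in the paper's proof is an extra property invoked there, namely that every prime ideal containing $a$ lies in a \emph{unique} maximal ideal: from this the paper concludes directly that each $\overline{R}/\overline{P}_i$, and then each block $\overline{e}_i\overline{R}$, is a valuation ring (this hypothesis is used in the proof although it does not appear in the theorem's statement; it is exactly what fails for $x$ in Henriksen's example). So the honest comparison is: you correctly located where all the difficulty sits, and your skeleton (comaximal splitting, then show each indecomposable factor is a chain ring; your local-Bezout-implies-chain-ring argument is fine) is compatible with the paper's, but the load-bearing step is missing in your write-up and cannot be supplied under the literal hypotheses -- it requires the ``each prime over $a$ lies in a unique maximal ideal'' condition that the paper's proof quietly assumes.
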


\begin{proof}
    Let $P_1, P_2, \ldots, P_n \in\mathrm{minspec}\, aR$.     We consider the factor ring $\overline {R} = R/aR$. We denote $\overline{P}_i = P_i/aR$, where $P_i\in\mathrm {minspec}\, aR$, $i = 1,2, \ldots, n$. Note that $\overline{P}_i\in\mathrm{minspec}\,\overline{R}$ are all minimal prime ideals of the ring $\overline{R}$. By \cite{llsh1974}, the ideals $\overline{P}_i$ are  comaximal in  $\overline {R}$. Obviously, $\mathrm{rad}\,\overline{R}=\bigcap\limits_{i = 1}^n\overline{P}_i$, and by the Chinese remainder theorem we have
    $$
    \overline{R}/\mathrm{rad}\,\overline{R}\cong\overline{R}/\overline{P}_1\oplus\overline{R}/\overline{P}_2\oplus\ldots\oplus\overline{R}/\overline{P}_n. $$
    Since  any prime ideal of $\mathrm{spec}\,aR$ is contained in a unique maximal ideal, then $\overline{R}/\overline{P}_i$ are  valuation rings. Moreover, there exist pairwise orthogonal idem\-po\-tents $\overline{\overline{e}}_1$, \ldots, $\overline{\overline{e}}_n$, where $\overline{\overline{e}}_i\in\overline{R}/\overline{P}_i$ such that $\overline{\overline{e}}_1 + \ldots + \overline{\overline{e}}_n = \overline{1}$. Then, by lifting  the idempotent $\overline{\overline{e}}_i$  modulo $\mathrm{rad}\,\overline{R}$ to pairwise orthogonal idempotents $\overline{e}_1, \ldots, \overline{e}_n \in\overline{R}$ we find that $1-(e_1\ldots +e_n)$ is an idempotent and $1-(e_1+\ldots+e_n)\in\mathrm {rad}\, \overline{R}$, which is possible only if it is  zero. Therefore,
    $$
    \overline{R}=\overline{e}_1\overline{R}\oplus \overline{e}_2\overline{R}\oplus\dots\oplus\overline{e}_n\overline{R}
    $$
    and each $\overline{e}_i\overline{R}$ is a homomorphic image of $\overline{R}$, i.e. a commutative Bezout ring.
    Since   any prime ideal of $\overline{R}$ is contained in a unique maximal ideal, then $\overline{e}_i\overline{R}$ is a valuation  ring.
\end{proof}

    A minor modification of the proof of Theorem~\ref{theor-1-2} gives us the following  result.

\begin{theorem}\label{theor-2-2}
    Let $R$ be a commutative Bezout domain in which any nonzero prime ideal is contained in a finite set of maximal ideals. Then for  any nonzero element $a\in R$ such that the set $\mathrm {minspec}\,aR$ is finite, the factor  ring $\overline{R}=R/aR$ is a direct sum of semilocal rings.
\end{theorem}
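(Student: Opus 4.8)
The plan is to follow the proof of Theorem~\ref{theor-1-2} line for line, changing only the two places where the hypothesis on maximal ideals is used. First I would list the finitely many minimal primes $P_1,\dots,P_n$ of $aR$ (finite because $\mathrm{minspec}\,aR$ is finite by assumption) and pass to $\overline{R}=R/aR$ with $\overline{P}_i=P_i/aR$. These $\overline{P}_i$ are exactly the minimal primes of $\overline{R}$, and by~\cite{llsh1974} they are pairwise comaximal, so $\mathrm{rad}\,\overline{R}=\bigcap_{i=1}^{n}\overline{P}_i$ and the Chinese remainder theorem gives
$$
\overline{R}/\mathrm{rad}\,\overline{R}\cong\overline{R}/\overline{P}_1\oplus\cdots\oplus\overline{R}/\overline{P}_n.
$$

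The first modification is the nature of the summands. Each $P_i$ contains $a\neq 0$, hence is a nonzero prime, and by hypothesis sits inside only finitely many maximal ideals of $R$. Because the maximal ideals of $\overline{R}/\overline{P}_i\cong R/P_i$ correspond bijectively to the maximal ideals of $R$ containing $P_i$, the quotient $R/P_i$ is a semilocal Bezout domain rather than a valuation ring. The remainder of the skeleton is imported unchanged from Theorem~\ref{theor-1-2}: lift the orthogonal idempotents of the product $\overline{R}/\mathrm{rad}\,\overline{R}$ summing to $\overline{1}$ to pairwise orthogonal $\overline{e}_1,\dots,\overline{e}_n\in\overline{R}$, note that $1-(e_1+\cdots+e_n)$ is an idempotent lying in the nil ideal $\mathrm{rad}\,\overline{R}$ and therefore vanishes, and conclude
$$
\overline{R}=\overline{e}_1\overline{R}\oplus\cdots\oplus\overline{e}_n\overline{R},
$$
each $\overline{e}_i\overline{R}$ being a homomorphic image of the Bezout ring $\overline{R}$ and so a commutative Bezout ring.

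The second modification is the final identification of the summands. The decomposition forces $\overline{e}_i\overline{R}$ to carry the single minimal prime induced by $\overline{P}_i$, so every maximal ideal of $\overline{e}_i\overline{R}$ contains it and thus corresponds to a maximal ideal of $R$ lying over $P_i$. By hypothesis there are only finitely many of these, whence $\overline{e}_i\overline{R}$ has finitely many maximal ideals, i.e.\ is semilocal, and $\overline{R}$ is a finite direct sum of semilocal rings.

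I expect the only genuine point requiring care---the ``minor'' in ``minor modification''---to be the ideal-theoretic bookkeeping in the last step: verifying that the summand $\overline{e}_i\overline{R}$ has a unique minimal prime (the image of $\overline{P}_i$, since the $n$ comaximal minimal primes are matched one-to-one with the $n$ components) and that its maximal ideals are precisely the maximal ideals of $R$ over $P_i$, so that the finiteness hypothesis transfers to a finite max-spectrum for each summand. Everything else is the verbatim mechanism of Theorem~\ref{theor-1-2}, with ``valuation ring'' systematically weakened to ``semilocal ring.''
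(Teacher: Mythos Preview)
Your proposal is correct and follows the paper's own approach exactly: the paper simply imports the decomposition $\overline{R}=\overline{e}_1\overline{R}\oplus\cdots\oplus\overline{e}_n\overline{R}$ from the proof of Theorem~\ref{theor-1-2} verbatim and then asserts that each $\overline{e}_i\overline{R}$ is semilocal because every prime of $\overline{R}$ lies in only finitely many maximal ideals. Your write-up is in fact more careful at the final step---explicitly tracking the unique minimal prime of each summand so that the finiteness hypothesis actually yields a finite max-spectrum---than the paper's one-line justification.
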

\begin{proof}
    According to the notations from  Theorem~\ref{theor-1-2} and its proof, we have
    $$
    \overline {R}=\overline{e}_1\overline{R}\oplus\overline{e}_2\overline{R}\oplus\ldots\oplus\overline{e}_n\overline{R}.
    $$
    Since  any prime ideal of the ring $\overline{R}$ is contained in a finite set of maximal ideals, then $\overline{e}_i\overline{R}$ is a semilocal ring.
\end{proof}

Obviously, if a commutative ring $R$ is a direct sum of valuation rings $R_i$, then $R$ is a commutative Bezout ring. Let $a = (a_1, \ldots, a_n)$, $b = (b_1, \ldots, b_n)$ be any elements of $R$, where $a_i, b_i \in R_i$, $i = 1,2, \ldots, n$. Since $R_i$ is a valuation ring, then $a_i = r_is_i$, where $r_iR + b_iR = R$ and $s_i'R_i + b_iR_i \ne R_i$ for  any non-invertible divisor $s_i'$ of the element $s_i$. If $r = (r_1, \ldots, r_n)$, $s = (s_1, \ldots, s_n)$ then obviously $a = rs$, $rR + bR = R$. For each $i$ such that $s_i'$ is a non-invertible divisor of $s_i \in R_i$, we have $s_iR_i + b_iR_i \ne R_i$. Hence $s'R + bR \ne R$, i.e. $a$ is an adequate element.

A ring $R$ is said to be everywhere adequate if any element of  $R$ is adequate.

Note that, as shown above, in the case of a commutative ring, which is a direct sum of  valuation rings, any element of the ring (in particular zero) is adequate, i.e. this ring is everywhere adequate.

\begin{definition}
    An nonzero element $a$ of a ring $R$ is called an element of almost stable range 1 if the quotient-ring $R/aR$ is a ring of stable range 1.
\end{definition}

    Any ring of stable range 1 is a ring of almost stable 1 \cite{mcgovern2007}. But not every element of stable range 1 is an element of almost stable range 1. For example, let $e$ be a nonzero idempotent of a commutative ring $R$ and $eR+aR=R$. Then $ex+ay=1$ for some elements $x,y\in R$ and $(1-e)ex+(1-e)ay=1-e$, so $e+a(1-e)y=1$. And we have that $e$ is an element of stable range 1 for any commutative ring. However if you consider the ring $R=\mathbb{Z}\times \mathbb{Z}$ and the element $e=(1,0)\in R$ then, as shown above, $e$ is an element of stable range 1, by $R/eR\cong \mathbb{Z}$,  and $e$ is not an element of almost stable range 1. Moreover, if $R$ is a commutative principal ideal domain (e.g. ring of integers), which is not of stable range 1, then every nonzero element of $R$ is an element of almost stable range 1.

\begin{definition}
    A commutative ring in which every nonzero element is an element of almost stable range 1 is called a ring of almost stable range 1.
\end{definition}

      The first  example of a ring of  almost stable range 1  is a ring of stable range 1. Also, every commutative principal ideal ring which is not a ring of stable range 1 (for example, the ring of integers) is a ring of almost stable range 1 which is not a ring of stable range 1.

    We note that a semilocal ring is an example of a ring of stable range 1. Moreover, a direct sum of rings of stable range 1 is a ring of stable range 1. As a consequence, we obtain the result from the previous theorems.

\begin{theorem}\label{theor-3-2}
    Let $R$ be a  Bezout domain in which every nonzero nonunit element has only finitely many prime ideals minimal over it.  Then the factor ring $R/aR$ is everywhere adequate if and only if $R$ is a direct sum of  valuation rings.
\end{theorem}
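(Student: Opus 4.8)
\medskip
\noindent\textbf{Proof proposal.}
Write $\overline R=R/aR$; the plan is to treat the two implications separately, the substantial one resting on the decomposition used in the proof of Theorem~\ref{theor-1-2}. If $\overline R$ is a direct sum of valuation rings, then $\overline R$ is everywhere adequate: this is precisely the computation recorded in the paragraph following Theorem~\ref{theor-2-2}, where an arbitrary element of such a ring --- the zero element included --- is factored coordinatewise, each coordinate ring being a chain ring, into an adequate factorisation. So only the converse requires work.

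Assume $\overline R$ is everywhere adequate. Because $a\neq 0$ has only finitely many minimal primes, the opening of the proof of Theorem~\ref{theor-1-2} applies verbatim: the minimal primes $\overline P_1,\dots,\overline P_n$ of $\overline R$ are pairwise comaximal with $\mathrm{rad}\,\overline R=\bigcap_i\overline P_i$, and lifting the idempotents of $\overline R/\mathrm{rad}\,\overline R\cong\bigoplus_i\overline R/\overline P_i$ yields pairwise orthogonal idempotents $\overline e_1,\dots,\overline e_n$ with
\[
\overline R=\overline e_1\overline R\oplus\dots\oplus\overline e_n\overline R .
\]
Each $S_i:=\overline e_i\overline R$ is a commutative Bezout ring, a homomorphic image of $\overline R$, with a single minimal prime, hence has no idempotents besides $0$ and $1$. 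I would then check that everywhere-adequacy passes to each $S_i$: given $x,b\in S_i$, apply the adequacy of $(x,0)\in\overline R=S_i\oplus(1-\overline e_i)\overline R$ relative to $(b,1)$; if $(x,0)=(r_1,r_2)(s_1,s_2)$ is the resulting factorisation, the first coordinate yields $x=r_1s_1$ with $r_1S_i+bS_i=S_i$, and for each nonunit divisor $s_1'$ of $s_1$ in $S_i$ the element $(s_1',1)$ is a nonunit divisor of $(s_1,s_2)$ in $\overline R$, so the third adequacy condition gives $s_1'S_i+bS_i\neq S_i$; thus $x$ is adequate in $S_i$. Hence each $S_i$ is an indecomposable everywhere-adequate commutative Bezout ring.

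It remains --- and this is the heart of the matter --- to show each $S_i$ is a valuation ring. Having a unique minimal prime, $S_i$ is connected, with prime nilradical $Q_i$. I would deduce it is local thus: its zero element is adequate, hence neat by \cite{Zab14}, so the factor ring $S_i/0\cdot S_i=S_i$ is clean; an indecomposable (connected) clean commutative Bezout ring is local by \cite{ZabavskyGatalevych2015}; and a local commutative Bezout ring is a valuation (chain) ring. Then $\overline R=S_1\oplus\dots\oplus S_n$ is a direct sum of valuation rings, completing the proof. The step I expect to be the true obstacle is exactly this last one --- extracting locality of the indecomposable summands from everywhere-adequacy, i.e.\ showing an indecomposable everywhere-adequate commutative Bezout ring is clean. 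If one wishes to bypass clean rings, one would argue directly in the spirit of Henriksen from the fact that $0_{S_i}$ is adequate: two distinct maximal ideals $M_1\neq M_2$ together with a nonzero nonunit $b\in M_1\setminus M_2$ would, via an adequate factorisation $0=rs$ relative to $b$, force $s\in Q_i$ (since $r\notin M_1\supseteq Q_i$ excludes $r\in Q_i$), so that $s$ lies in every maximal ideal; one then contradicts the third adequacy condition by producing a nonunit divisor of $s$ comaximal with $b$. Everything else is bookkeeping over Theorems~\ref{theor-1-2}--\ref{theor-2-2} and the remark following the latter.
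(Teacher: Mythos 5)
Your proof is correct, and it is built from the same ingredients the paper quotes, but the hard direction is handled differently from what the paper intends. The paper gives no written argument for Theorem~\ref{theor-3-2}: it is presented as a consequence of Theorem~\ref{theor-1-2} and the remark after Theorem~\ref{theor-2-2}, so the intended ``only if'' direction is: everywhere-adequacy of $R/aR$ yields (by Henriksen's observation recalled in the paper) that every prime over $aR$ lies in a unique maximal ideal, which is precisely the hypothesis that the proof of Theorem~\ref{theor-1-2} actually uses (though it is absent from that theorem's statement), and then that proof concludes each summand is a valuation ring. You instead reuse only the splitting part of that proof (comaximal minimal primes, Chinese remainder theorem, idempotent lifting), transfer everywhere-adequacy to each connected summand $S_i$ via the $(x,0)$, $(b,1)$ device --- this restriction step is fine and worth writing out, as the paper never does it --- and then obtain locality of $S_i$ from the chain ``$0$ adequate $\Rightarrow$ $0$ neat \cite{Zab14} $\Rightarrow$ $S_i$ clean $\Rightarrow$ local \cite{ZabavskyGatalevych2015}'', i.e.\ the same mechanism the paper uses in its theorem on $J$-Noetherian domains, finishing with the standard fact that a local Bezout ring is a chain ring. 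What your route buys is that you never need to extend Henriksen's statement about adequate domains to the quotient ring with zero divisors; the price is that \cite{Zab14} and the neat-implies-clean step must be used for Bezout rings (not domains) and for the element $0$ --- which is how the paper itself phrases \cite{Zab14}, and which can be checked directly: $0=rs$ with $rR+sR=R$ splits the ring and $e=ru$ is an idempotent with $eR+bR=R$, $(1-e)R+cR=R$, giving cleanness. Two minor notes: you silently (and correctly) read ``$R$ is a direct sum of valuation rings'' as ``$R/aR$ is''; and your closing Henriksen-style alternative is not complete as sketched (the nonunit divisor of $s$ comaximal with $b$ still has to be produced), but since it is offered only as a bypass this does not affect the proof.
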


\begin{theorem}\label{theor-4-2}
    Let $R$ be a  Bezout domain in which every nonzero nonunit element has only finitely many prime ideals minimal over it  and any nonzero prime ideal $\mathrm{spec}\,aR$ is contained in a finite set of maximal ideals. Then $a$ is an element of almost stable range 1.
\end{theorem}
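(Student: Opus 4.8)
The plan is to recognise this statement as an almost immediate consequence of Theorem~\ref{theor-2-2}: I want to show that $R/aR$ is a ring of stable range~1 by exhibiting it as a finite direct sum of semilocal rings. First I would dispose of the trivial case $a\in U(R)$, where $R/aR$ is the zero ring and there is nothing to prove, and so assume $a$ is a nonzero nonunit. Then $\mathrm{minspec}\,aR=\{P_1,\dots,P_n\}$ is finite by hypothesis, each $P_i$ is nonzero (it contains $a\ne 0$), and each $P_i$ belongs to $\mathrm{spec}\,aR$, so by hypothesis each $P_i$ lies in only finitely many maximal ideals of $R$.

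Next I would run the construction from the proofs of Theorems~\ref{theor-1-2} and~\ref{theor-2-2} verbatim inside $\overline R=R/aR$. The images $\overline P_i=P_i/aR$ are exactly the minimal primes of $\overline R$, they are pairwise comaximal by \cite{llsh1974}, $\mathrm{rad}\,\overline R=\bigcap_{i=1}^n\overline P_i$, and lifting the orthogonal idempotents produced by the Chinese remainder decomposition of $\overline R/\mathrm{rad}\,\overline R$ yields
$$
\overline R=\overline{e}_1\overline R\oplus\overline{e}_2\overline R\oplus\dots\oplus\overline{e}_n\overline R,
$$
where each $\overline{e}_i\overline R$ is a homomorphic image of $\overline R$ whose unique minimal prime is $\overline P_i$. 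Since the prime ideals of $\overline{e}_i\overline R$ are precisely the primes of $R$ that contain $P_i$, its maximal ideals correspond to the maximal ideals of $R$ containing $P_i$; by the previous paragraph there are only finitely many of these, so each summand $\overline{e}_i\overline R$ is semilocal.

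It then remains to quote the two facts already recorded in the text: a semilocal ring has stable range~1, and a finite direct sum of rings of stable range~1 has stable range~1. Combining these with the displayed decomposition shows that $R/aR$ has stable range~1, i.e.\ $a$ is an element of almost stable range~1. I do not anticipate a genuine obstacle here: essentially all the work is contained in Theorem~\ref{theor-2-2}, and the only point deserving a line of explicit justification is that in Theorem~\ref{theor-4-2} the finiteness condition on maximal ideals is imposed only on the primes over $a$, which, as checked above, is exactly the part of the hypothesis that the decomposition argument actually consumes.
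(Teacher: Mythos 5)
Your proposal is correct and matches the paper's intended argument: the paper derives Theorem~\ref{theor-4-2} as an immediate consequence of the decomposition of $R/aR$ into semilocal summands from Theorem~\ref{theor-2-2}, together with the stated facts that semilocal rings and finite direct sums of rings of stable range~1 have stable range~1. Your extra remark that only the primes in $\mathrm{spec}\,aR$ are needed for the decomposition is a sound clarification of the same route, not a different one.
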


\textbf{Open Question}. Is it true that every  commutative Bezout domain in which any non-zero prime ideal is  contained in a finite set of maximal ideals is an elementary divisor ring?

\end{document}